\documentclass{amsart}

\usepackage{graphicx} 
\usepackage{amsmath,amssymb,amsthm,mathtools,thmtools} 
\usepackage{xspace}
\usepackage{enumitem}
\usepackage{bbm}
\usepackage{xcolor}
\usepackage{sepfootnotes}
\usepackage{verbatim}

\usepackage[pdftex,pagebackref=false]{hyperref}
\usepackage[capitalise]{cleveref}

\newcommand{\inv}[1]{{#1^{-1}}}
\newcommand{\dee}{\,\mathrm{d}}
\newcommand{\indic}[1]{{\mathbbm{1}}_{#1}}
\newcommand{\haar}{\mathbf{m}}

\newcommand{\dirac}[1]{\delta_{#1}}
\newcommand{\invo}[1]{{#1}^*}
\newcommand{\supp}{\operatorname{supp}}

\newcommand{\C}{{\mathbb{C}}}

\newcommand{\Z}{{\mathbb{Z}}}

\newcommand{\OP}{{\mathbb{Z}_p}} 
\newcommand{\QP}{{\mathbb{Q}_p}}

\newcommand{\falg}[1]{\operatorname{A}(#1)}

\newcommand{\Lp}[2]{\operatorname{L}^{#1}\ifnonempty{#2}{(#2)}{}}
\newcommand{\Lo}[1]{\Lp{1}{#1}} 
\newcommand{\Li}[1]{\Lp{\infty}{#1}} 
\newcommand{\lp}[2]{\operatorname{\ell}^{#1}\ifnonempty{#2}{(#2)}{}}
\newcommand{\lo}[1]{\lp{1}{#1}} 
\newcommand{\meas}[1]{\operatorname{M}(#1)}
\newcommand{\prob}[1]{\operatorname{Prob}(#1)}

\newcommand{\cbdd}[1]{\operatorname{C}_b({#1})} 
\newcommand{\cc}[1]{\operatorname{C}_c({#1})} 
\newcommand{\cvan}[1]{\operatorname{C}_0({#1})} 

\newcommand{\sdp}[2]{{#1 \rtimes #2}}
\newcommand{\actson}{\curvearrowright}
\newcommand{\orbits}[2]{\mathcal{O}_{#2}(#1)}

\newcommand{\stab}[2]{{#2}_{#1}}
\newcommand{\lac}[2]{{#2} \mathrel{{\boldsymbol{\cdot}}} {#1}}

\newcommand{\cLo}[1]{\operatorname{ZL}^1(#1)}
\newcommand{\cfalg}[1]{\operatorname{ZA}(#1)}
\newcommand{\kfalg}[2]{\mathop{\mathrm{Z}_{#2}\mathrm{A}}(#1)}
\newcommand{\kLo}  [2]{\mathop{\mathrm{Z}_{#2}\mathrm{L}^1}(#1)}
\newcommand{\klo}  [2]{\mathop{\mathrm{Z}_{#2}\mathrm{\ell}^1}(#1)}

\newcommand{\kmeas}[2]{\mathop{\mathrm{Z}_{#2}\mathrm{M}}(#1)}
\newcommand{\hdual}[1]{\widehat{#1}}

\newcommand{\pdual}[1]{\widehat{#1}}
\newcommand{\ft}[1]{\widehat{#1}}
\newcommand{\conj}[1]{\overline{#1}}
\newcommand{\ann}[1]{{#1}^\perp}

\newcommand{\Ri}{R}
\newcommand{\Riu}{\Ri^*}
\newcommand{\Mi}{M}

\newcommand{\Fi}{F}
\newcommand{\GL}[2]{\operatorname{GL}_{#2}(#1)}

\newcommand{\gf}[1]{\mathbb{F}_{#1}}

\newcommand{\Inn}[1]{\operatorname{Inn}(#1)}

\newcommand{\F}{\mathbb{F}}

\newcommand{\irrep}[1]{\widehat{#1}}
\newcommand{\conjhyp}[1]{\operatorname{Conj}(#1)}

\newcommand{\extrem}[1]{\operatorname{E}(#1)}
\newcommand{\pdefu}[1]{\operatorname{P}_1(#1)}

\DeclarePairedDelimiterX{\set}[1]{\{}{\}}{\argordot{#1}}
\DeclarePairedDelimiterX{\norm}[1]{\|}{\|}{\argordot{#1}}
\DeclarePairedDelimiterX{\abs}[1]{\lvert}{\rvert}{\argordot{#1}}
\DeclarePairedDelimiterX{\card}[1]{\lvert}{\rvert}{\argordot{#1}}
\DeclarePairedDelimiterX{\gen}[1]{\langle}{\rangle}{\argordot{#1}}

\DeclarePairedDelimiterX{\ip}[2]{\langle}{\rangle}{\argordot{#1}\,|\,\argordot{#2}} 
\DeclarePairedDelimiterX{\dup}[2]{\langle}{\rangle}{\argordot{#1}, \argordot{#2}} 
\DeclarePairedDelimiterX{\sbf}[2]{[}{]}{\argordot{#1}, \argordot{#2}} 

\newtheorem{theorem}{Theorem}[section]
\newtheorem{lemma}[theorem]{Lemma}
\newtheorem{proposition}[theorem]{Proposition}
\newtheorem{conjecture}[theorem]{Conjecture}
\newtheorem{corollary}[theorem]{Corollary}

\theoremstyle{definition} 
\newtheorem{definition}[theorem]{Definition}

\theoremstyle{remark}

\newtheorem{remark}[theorem]{Remark}

\newcommand{\define}[1]{{\itshape #1}}
\newcommand{\deq}{:=}

\newcommand{\AV}{Aleksa Vuji\v{c}i\'{c}\xspace}

\newcommand\restr[2]{{
  \left.\kern-\nulldelimiterspace 
  #1 
  \vphantom{\big|} 
  \right|_{#2} 
  }}

\newcommand*{\ifnonempty}[3]{%
    \def\arg{#1}%
    \ifx\arg\empty
        #3%
    \else
        #2%
    \fi%
}

\newcommand*{\argordefault}[2]{\ifnonempty{#1}{#1}{#2}}
\newcommand*{\argordot}[1]{\argordefault{#1}{\blankdot}}
\newcommand{\blankdot}{\,\cdot\,}

\title{The Fourier algebra of certain compact orbit hypergroups}
\author{\AV}
\date{\today}

\begin{document}

\begin{abstract}

    Let $R$ be a compact discrete valuation ring and $G = \sdp{R^d}{\GL{R}{d}}$.
    We show that the central Fourier algebra $\cfalg{G}$ is not amenable, reaffirming a conjecture of Alaghmandan and Spronk.
    Our methods reduce to studying the Fourier algebra of the commutative orbit hypergroup $H = R^d/\GL{R}{d}$. 
    Along the way, we also show that dual of any commutative orbit hypergroup $H$ is the hypergroup of the corresponding dual action, and this in turn shows that $\falg{H} \cong \Lo{\hdual{H}}$, which aligns with the classical setting.
\end{abstract}

\maketitle

\section{Introduction}


Given a compact group $G$, we shall let $\irrep{G}$ denote the (equivalence classes of) irreducible unitary representations.
Each representation has a well-defined character associated to it, and the closed span of these characters coincides with $\cLo{G}$, the centre of the $\Lo{G}$.
The amenability and weak amenability of $\cLo{G}$ was initially studied by Azimifard, Samei and Spronk \cite{azimifard-samei-spronk}, and it was later shown by Alaghmandan and Crann \cite{alaghmandan-crann} that $\cLo{G}$ is amenable whenever $G$ is virtually abelian (has an abelian subgroup of finite index).
Partial converses were obtained for connected groups, and groups which are an (infinite) product of finite non-abelian groups \cite[Theorem 1.7, Theorem 1.10]{azimifard-samei-spronk}.
It is conjectured that the converse holds for all compact groups.

The appropriate dual notion for $\cLo{G}$ is the \emph{central Fourier algebra} $\cfalg{G}$.
Despite the name and suggestive notation, this is clearly not the centre of the Fourier algebra, rather this is a holdover from its connection to $\cLo{G}$.
Since the algebra $\cLo{G}$ consists precisely of the $\Lo{}$-functions which are (almost everywhere) constant on conjugacy classes, this is naturally isomorphic to $\Lo{H}$, where $H$ is the \emph{conjugacy class hypergroup} $\conjhyp{G}$.
The dual hypergroup of $\conjhyp{G}$ is the character hypergroup $\irrep{G}$, as shown in \cite[Proposition 2.4]{alaghmandan-amini}, and through the usual Fourier transform, it is seen that $\Lo{\irrep{G}}$ is Banach algebra isomorphic to the central Fourier algebra $\cfalg{G} = \cLo{G} \cap \falg{G}$, which is considered as a subalgebra of $\falg{G}$.
This algebra was initially introduced by Alaghmandan and Spronk \cite{alaghmandan-spronk}, where they show that $\cfalg{G}$ is amenable whenever $G$ is virtually abelian (and conjecture the converse to be true as well).

The $K$-central Fourier algebra is a generalisation of the central Fourier algebra, and it can also be framed as the Fourier algebra of its associated \emph{orbit hypergroup}.
We introduce this terminology and examine this connection in \cref{sec:hypers}.
These $K$-central Fourier algebras arise particularly when studying semidirect product groups;
they are typically simpler and therefore often easier to compute directly.
We show this explicitly in \cref{sec:affine}, where we prove that the for instance, $p$-adic affine group has a non-amenable central Fourier algebra, reinforcing the conjecture of Alaghmandan and Spronk.

Throughout this paper, unless stated otherwise, $G$ will denote a compact group, and $\Gamma$ a discrete group (typically serving as a dual to $G$).
We also reserve $K$ for a compact group acting on either $G$ or $\Gamma$.

\section{Orbit Hypergroups}
\label{sec:hypers}

\subsection{Hypergroups}
\emph{Hypergroups} are a generalisation of groups which were originally introduced by Jewett \cite{jewett} under the name ``\emph{convolution spaces}'' or ``\emph{convos}''.
Let $H$ denote a locally compact Hausdorff space, and let us adopt the following conventions.
\begin{itemize}
    \item Let $\meas{H}$ be the complex-valued (Radon) measures on $H$.
    
    \item Let $\prob{H} \subseteq \meas{H}$ be the space of probability measures.
    
    \item For $x \in H$, let $\dirac{x}$ denote the point measure at $x$.
    
    
\end{itemize}
        
        
        
        
        
While we omit the definition of a general hypergroup, it may be found in the original work of Jewett or of the text of Bloom and Heyer \cite{bloom-heyer,jewett}.
Instead we present a few key ideas below, as well as a definition for a specific class of hypergroups.
Additional details may also be found in the aforementioned texts.

Given an $x \in H$, we shall often let $x$ stand in place of the point measure $\dirac{x}$. 
In particular we will write $x * y = \dirac{x} * \dirac{y}$, and in this case, we shall call the map $* : H \times H \to \prob{H}$ the \define{hyperproduct} of $H$.
For subsets $A,B \subseteq H$, we define $A * B$ as a subset of $H$ given by
\begin{equation}
    \label{eq:set-hypproduct}
    A * B \deq \bigcup_{x \in A, y \in B} \supp(x*y).
\end{equation}

In this paper, we shall solely use hypergroups which possess a Haar measure, which shall be denoted $\haar_H$.
Naturally, we set $\Lo{H} \deq \Lo{H,\haar_H}$ and equip it with the Banach algebra structure inherited from $\meas{H}$.

When $H$ is commutative, its \define{dual} $\hdual{H}$ is defined as the collection of all non-zero bounded continuous functions $\chi : H \to \C$ satisfying $\chi(x * y) = \chi(x)\chi(y)$ and $\chi(\invo{x}) = \conj{\chi(x)}$.
This space is equipped with the topology of uniform convergence on compacta, and has an associated Plancherel measure (see \cite[Theorem 7.3I]{jewett}).
However, in general, this space need not be a hypergroup; when $\hdual{H}$ does form a hypergroup where the Haar measure and Plancherel measure coincide, we will say that $H$ is a \define{strong} hypergroup.
All hypergroups presented in this paper will be strong hypergroups.

The Fourier algebra of a hypergroup is also somewhat nuanced.
It is defined in much the same manner as it is for classical groups, so we shall not present the definition here; instead we refer the reader to any of the papers \cite{alaghmandan,alaghmandan-crann,muruganandam1}.
However, $\falg{H}$ is not necessarily closed under pointwise multiplication, let alone a Banach algebra.
For hypergroups in which $\falg{H}$ does indeed form a Banach algebra, we say that $H$ is a \define{regular Fourier hypergroup}.

\subsection{Orbit hypergroups}
\newcommand{\SH}[1]{\textbf{SH}\textsubscript{#1}}
While it has long been known that the orbits of a compact group $K$ of automorphisms on $G$ form a hypergroup (see the original paper of Jewett \cite[Section 8.3]{jewett}), it will simplify matters to describe them in the context of \define{spherical hypergroups}.
These were introduced and studied by Muruganandam, and they are defined in terms of a mapping $\pi : \cc{G} \to \cc{G}$ known as a \define{spherical projector}, which satisfies conditions labelled as \SH{1}, \SH{2}, and \SH{3} in \cite[Definition 2.1]{muruganandam2}.
These are as follows:
\begin{definition}
    Let $G$ be a locally compact group.
    We say that $\pi: \cc{G} \to \cc{G}$ is a \define{spherical projector} if $\pi$ satisfies all of the following conditions.
    For convenience, we set $I(f) = \int_G f(x) \dee x$ for $f \in \cc{G}$.
    \begin{itemize}
        \item[(\SH{1})] 
            For every $f,g \in \cc{G}$,
            \begin{enumerate}[label=(\roman*)]
                \item $\pi^2 = \pi$ and preserves positivity,
                \item $\pi(\pi(f) \cdot g) = \pi(f) \cdot \pi(g)$,
                \item $I(\pi(f) \cdot g) = I(f \cdot \pi(g))$, and
                \item $I(\pi(f)) = I(f)$.
            \end{enumerate}
        \item[(\SH{2})] 
            For every $f,g \in \cc{G}$, $\pi(\pi(f) * \pi(g)) = \pi(f) * \pi(g)$.
        \item[(\SH{3})] 
            Let $\pi$ also denote the adjoint map on $\meas{G}$, and for $x \in G$, denote $O_x = \supp(\pi(\dirac{x}))$.
            For every $x,y \in G$,
            \begin{enumerate}[label=(\roman*)]
                \item either $O_x \cap O_y = \emptyset$ or $O_x = O_y$,
                \item if $x \in O_y$ then $\inv{x} \in O_{\inv{y}}$,
                \item if $O_{xy} = O_e$ then $O_x = O_{\inv{y}}$, and
                \item the mapping $x \mapsto O_x$ under the so-called ``Michael topology'' (defined in \cite[Section 2.5]{jewett}) is continuous.
            \end{enumerate}
    \end{itemize}
\end{definition}
We are primarily interested in the case when $K$ is a compact group acting on $G$, whereupon
\begin{equation}
    \label{eq:action-spherical-proj}
    \pi(f)(x) \deq \int_K f(\lac{x}{k}) \dee k
\end{equation}
for $f \in \cc{G}$ defines a spherical projector on $G$.
\begin{proposition}
    As defined above, $\pi$ is a spherical projector.
\end{proposition}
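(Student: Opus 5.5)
I will verify the axioms \SH{1}--\SH{3} directly for $\pi(f)(x) = \int_K f(\lac{x}{k}) \dee k$, where $K$ is a compact group acting on $G$ by continuous automorphisms and $\dee k$ is normalised Haar measure on $K$. Two facts do most of the work. First, each automorphism $x \mapsto \lac{x}{k}$ preserves the Haar measure of $G$. This holds because the modulus $k \mapsto \Delta(k)$ is a continuous homomorphism $K \to (0,\infty)$, and the only compact subgroup of $(0,\infty)$ is $\set{1}$. Second, $\pi(f)$ is $K$-invariant, meaning $\pi(f)(\lac{x}{k'}) = \pi(f)(x)$; this follows from translation invariance of $\dee k$. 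The adjoint of $\pi$ on measures sends $\dirac{x}$ to the pushforward of $\dee k$ under $k \mapsto \lac{x}{k}$. Hence $O_x = K\cdot x$ is exactly the $K$-orbit of $x$, by continuity of the action and compactness of $K$.

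For \SH{1}, the plan is as follows.
\begin{itemize}
  \item \emph{Well-definedness.} $\pi(f) \in \cc{G}$: it is supported in the compact set $K\cdot\supp f$, and it is continuous by uniform continuity of $f$ on compacta together with joint continuity of the action.
  \item \emph{(i)} Positivity is clear. Idempotence $\pi^2 = \pi$ follows from $K$-invariance of $\pi(f)$.
  \item \emph{(ii)} Since $\pi(f)$ is $K$-invariant, it pulls out of the integral defining $\pi(\pi(f)\cdot g)$.
  \item \emph{(iii), (iv)} Use Fubini together with the substitution $x \mapsto \lac{x}{\inv{k}}$, which preserves $\dee x$ by the first fact. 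This gives $I(\pi(f)\, g) = \int_K\int_G f(\lac{x}{k})g(x) \dee x \dee k = \int_K \int_G f(x) g(\lac{x}{\inv{k}}) \dee x \dee k = I(f\,\pi(g))$. Item (iv) is the special case $g = 1$ near the support, or the same computation done directly.
\end{itemize}

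For \SH{2}, it suffices to show that the convolution of two $K$-invariant functions is $K$-invariant. Take $F = \pi(f)$ and $H = \pi(g)$. Then
\[
(F*H)(\lac{x}{k}) = \int_G F(y) H(\inv{y}(\lac{x}{k})) \dee y .
\]
Substitute $y = \lac{z}{k}$, which is measure preserving, and use that $k$ acts by automorphisms and that $F$ and $H$ are invariant. This turns the expression into $(F*H)(x)$. Since $\pi$ fixes $K$-invariant functions, we get $\pi(F*H) = F*H$.

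For \SH{3}, I will use the identification $O_x = K\cdot x$.
\begin{itemize}
  \item \emph{(i)} Orbits are either disjoint or equal.
  \item \emph{(ii)} If $x = \lac{y}{k}$, then $\inv{x} = \lac{\inv{y}}{k}$, since $k$ acts by automorphisms.
  \item \emph{(iii)} $O_e = \set{e}$, so $O_{xy} = O_e$ forces $xy = e$, i.e.\ $x = \inv{y}$.
  \item \emph{(iv)} This is the step I expect to be the main (if modest) obstacle, because it requires handling the Michael topology on compact subsets. I would show that $x \mapsto K\cdot x$ is continuous into the Hausdorff-metric/Vietoris topology. The set $K\cdot x$ meets an open set $U$ exactly when $x$ lies in the open set $\bigcup_k k^{-1}\cdot U$. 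If $K\cdot x \subseteq U$, then by compactness of $K$ and joint continuity (a tube-lemma argument) there is a neighbourhood $V$ of $x$ with $K\cdot V \subseteq U$. These two conditions are exactly continuity for the Michael topology as defined in \cite[Section 2.5]{jewett}.
\end{itemize}
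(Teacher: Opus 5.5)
Your proposal is correct and follows the same route as the paper: direct computation for \textbf{SH}1 and \textbf{SH}2, and the identification $O_x = K\cdot x$ for \textbf{SH}3. For \textbf{SH}3(iv), the paper asserts that the Michael topology on orbits agrees with the quotient topology, while you check directly on subbasic sets that $x \mapsto K\cdot x$ is continuous, which is exactly the direction needed; you also make explicit that automorphisms from a compact $K$ preserve the Haar measure of $G$, a fact the paper leaves implicit but which the substitutions in \textbf{SH}1(iii),(iv) and \textbf{SH}2 rely on.
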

\begin{proof}
    Verifying that \eqref{eq:action-spherical-proj} satisfies \SH{1} and \SH{2} is a series of straightforward algebraic computations.
    For \SH{3}, it is clear that $\supp \pi(\delta_x)$ is precisely the $K$-orbit of $x$, from which it immediately follows that $\pi$ satisfies (i), (ii) and (iii) of \SH{3}.
    Lastly, one may check that the Michael topology, when restricted to the orbits of $K$, coincides with the usual quotient topology.
    Thus the map $x \mapsto O_x$ is continuous.
\end{proof}
We note that the spherical projector may be extended in the obvious way to maps $\pi_0 : \cvan{G} \to \cvan{G}$, $\pi^1 : \Lo{G} \to \Lo{G}$, and $\pi^A : \falg{G} \to \falg{G}$, with corresponding adjoint maps $\pi_0^* :\meas{G} \to \meas{G}$ and $\pi^\infty : \Li{G} \to \Li{G}$.
It is easy to verify that these coincide where appropriate (taking the form as in \eqref{eq:action-spherical-proj}), so we shall denote all these maps as $\pi$, which shall cause no confusion.

Muruganandam \cite[Theorem 2.12]{muruganandam2} shows that the collection $\set{O_x : x \in G}$ (which coincides with the orbit space $\orbits{G}{K}$) forms a hypergroup $H$, with convolution inherited from $G$.
We shall call $H$ the \define{orbit hypergroup} of $K \actson G$.
For a given measure $\mu \in \meas{G}$, we say that $\mu$ is \define{$K$-radial} if $\pi(\mu) = \mu$, and we let $\kmeas{G}{K}$ denote the collection $K$-radial measures.
We can then identify the measure space $\meas{H}$ as a quotient of $\meas{G}$, under the restriction map onto $K$-invariant subsets of $G$.
In other words, we have that $\meas{H} = \pi(\meas{G}) = \kmeas{G}{K}$.

For $f$ a function in either of $\Lo{G}$ or $\falg{G}$, we also say that $f$ is \define{$K$-radial} if $\pi(f) = f$.
Note that this condition is equivalent to statement that $f$ is (almost everywhere) constant on the orbits of $K$. 
The collection of $K$-radial functions is similarly denoted $\kLo{G}{K}$ and $\kfalg{G}{K}$ respectively, and we call these the \define{$K$-centre of $\Lo{G}$} and the \define{$K$-central Fourier algebra of $G$}.

\begin{remark}
    It should be noted that the ``$K$-radial'' terminology slightly differs from the paper of Muruganandam.
    They use the term ``$\pi$-radial'' instead of $K$-radial, and opts for notation such as $\mathrm{A}_\pi(G)$.
\end{remark}

These $K$-central Fourier algebras also exhibit a property akin to Herz's restriction theorem.
\begin{proposition}
    \label{thm:hyper-herz}
    Let $K$ act on a locally compact group $G$, and let $N$ be a closed $K$-invariant subgroup of $G$.
    Then
    \begin{equation*}
        \restr{\kfalg{G}{K}}{N} = \kfalg{N}{K}
    \end{equation*}
\end{proposition}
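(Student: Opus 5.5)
We deduce the statement from Herz's classical restriction theorem, $\restr{\falg{G}}{N} = \falg{N}$, together with the fact that the spherical projector \eqref{eq:action-spherical-proj} commutes with restriction to $N$. There are two inclusions to check. Throughout we write $\pi_G$ and $\pi_N$ for the projectors on $G$ and on $N$ defined by \eqref{eq:action-spherical-proj}. The second makes sense because $N$ is $K$-invariant, so $K$ acts on $N$ by the restricted automorphisms.

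For the inclusion $\restr{\kfalg{G}{K}}{N} \subseteq \kfalg{N}{K}$, take $u \in \kfalg{G}{K}$. By Herz's theorem $\restr{u}{N} \in \falg{N}$. Since $u$ is constant on $K$-orbits of $G$, and each $K$-orbit of a point of $N$ lies inside $N$, the restriction $\restr{u}{N}$ is constant on $K$-orbits of $N$. Equivalently, $\pi_N(\restr{u}{N}) = \restr{\pi_G(u)}{N} = \restr{u}{N}$.

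Here we must be careful with ``almost everywhere'' on the null set $N$. Elements of $\falg{G}$ are continuous, however, so $\pi_G(u) = u$ holds pointwise and the restriction argument is sound. More generally, the identity $\restr{\pi_G(v)}{N} = \pi_N(\restr{v}{N})$ holds pointwise for every continuous $v$, directly from the formula $\int_K v(\lac{x}{k}) \dee k$ evaluated at $x \in N$.

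For the reverse inclusion, take $w \in \kfalg{N}{K}$. By Herz's theorem there is $v \in \falg{G}$ with $\restr{v}{N} = w$. Set $u = \pi_G(v)$. Then $\restr{u}{N} = \pi_N(w) = w$ by the identity above, and $u$ is $K$-radial because $\pi_G^2 = \pi_G$ by \SH{1}.

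The main point needing justification is that $\pi_G$ maps $\falg{G}$ into $\falg{G}$, the extension $\pi^A$ mentioned after the first proposition. I would justify it as follows. Each automorphism $\alpha_k : x \mapsto \lac{x}{k}$ induces an isometry $v \mapsto v\circ\alpha_k$ of $\falg{G}$; this holds because composing a coefficient function with an automorphism gives a coefficient of the twisted representation, with the same norm. The map $k \mapsto v\circ\alpha_k$ is continuous into $\falg{G}$, as one checks on a dense set, for instance $\cc{G}*\cc{G}$, using joint continuity of the action. Hence $\pi_G(v) = \int_K v\circ\alpha_k \dee k$ is a Bochner integral in $\falg{G}$, with $\norm{\pi_G(v)}_{\falg{G}} \le \norm{v}_{\falg{G}}$. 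Its pointwise values agree with \eqref{eq:action-spherical-proj}, because point evaluations are continuous functionals on $\falg{G}$.

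This completes both inclusions. As a byproduct, the restriction map is a quotient map on these algebras, since the norm bound passes through $\pi_G$.
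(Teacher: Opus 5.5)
Your proof is correct and follows the paper's route. The identity $R\pi_G = \pi_N R$ gives the inclusion, and surjectivity comes from applying $\pi_G$ to a Herz extension of a $K$-radial $w$; this is exactly what the paper's one-line ``surjectivity follows by Herz's restriction theorem'' means. Your Bochner-integral argument that $\pi_G$ is a contraction on $\falg{G}$ fills in a step the paper treats as obvious, and it also gives your extra conclusion that the restriction map is a quotient map.
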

\begin{proof}
    Let $\pi_G : \falg{G} \to \falg{G}$ and $\pi_N : \falg{N} \to \falg{N}$ denote the $K$-spherical projectors for $G$ and $N$ respectively, and let $R : \falg{G} \to \falg{N}$ denote the restriction map.

    The spherical projectors will be given by a formula as in \eqref{eq:action-spherical-proj}, though they will be defined on their respective Fourier algebras.
    It follows from this that $R \pi_G = \pi_N R$.
    Thus if $u \in \kfalg{G}{K}$ and $v = Ru$, then 
    \begin{equation*}
        \pi_N v = \pi_N R u = R \pi_G u = R u = v
    \end{equation*}
    and so $Ru \in \kfalg{N}{K}$.
    Surjectivity follows by Herz's restriction theorem.
\end{proof}

\subsection{Discrete, compact, and abelian orbit hypergroups}
When the underlying group $\Gamma$ is discrete, we have a concrete description of the hypergroup $H$.
\begin{proposition}
    \label{thm:discrete-hypergroup-dirac}
    Let $K$ act on a discrete group $\Gamma$, with associated spherical projector $\pi$.
    Then $\pi(\dirac{x}) \in \klo{\Gamma}{K}$ with
    \begin{equation*}
        \pi(\dirac{x}) = \frac{1}{\card{\lac{x}{K}}} \indic{\lac{x}{K}} 
    \end{equation*}
\end{proposition}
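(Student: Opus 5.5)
The plan is to compute $\pi(\dirac{x})$ directly as the adjoint of the spherical projector \eqref{eq:action-spherical-proj}, evaluating it on point masses. Since $\Gamma$ is discrete, $\meas{\Gamma} = \lo{\Gamma}$ and $\cc{\Gamma}$ consists of finitely supported functions. The adjoint is characterised by $\dup{\pi(\mu)}{f} = \dup{\mu}{\pi(f)}$ for $f \in \cc{\Gamma}$. With $\mu = \dirac{x}$ this gives
\begin{equation*}
    \dup{\pi(\dirac{x})}{f} = \pi(f)(x) = \int_K f(\lac{x}{k}) \dee k ,
\end{equation*}
so $\pi(\dirac{x})$ is the pushforward of normalised Haar measure on $K$ under the orbit map $k \mapsto \lac{x}{k}$.

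The key step is to show that the orbit $\lac{x}{K}$ is finite and that this pushforward is uniform on it. The orbit is the continuous image of the compact group $K$ in the discrete space $\Gamma$, so it is compact and discrete, hence finite. I am assuming, as the paper implicitly does, that the action map $K \times \Gamma \to \Gamma$ is continuous. It follows that the stabiliser $\stab{x}{K}$ is an open subgroup of finite index $\card{\lac{x}{K}}$ in $K$.

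The fibres of the orbit map over points $y = \lac{x}{k_0}$ are cosets of $\stab{x}{K}$: one-sided cosets, on the side dictated by whether the action is written on the left or the right. Each such coset has Haar measure $1/[K:\stab{x}{K}] = 1/\card{\lac{x}{K}}$. This uses invariance of Haar measure together with the fact that $K$ is compact, hence unimodular, so left and right cosets have the same measure. Therefore
\begin{equation*}
    \pi(f)(x) = \sum_{y \in \lac{x}{K}} \frac{f(y)}{\card{\lac{x}{K}}} ,
\end{equation*}
which is the pairing of $f$ with $\card{\lac{x}{K}}^{-1}\indic{\lac{x}{K}}$. Since $\cc{\Gamma}$ separates measures, this gives the identity in the statement.

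Finally, $\pi(\dirac{x})$ lies in $\klo{\Gamma}{K}$. It is finitely supported, hence in $\lo{\Gamma}$. It is $K$-radial since $\pi^2 = \pi$ by \SH{1}(i); equivalently, it is visibly constant on $K$-orbits. The only step needing any care is the finiteness of the orbit and the coset-measure computation; the rest is unwinding definitions.
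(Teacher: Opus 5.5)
Your proof is correct and follows essentially the same route as the paper: both reduce $\int_K f(\lac{x}{k})\dee k$ to a uniform average over the finite orbit, using that the stabiliser has finite index and its cosets have equal Haar measure. The differences are only in packaging. The paper evaluates the orbit-average formula for $\pi(f)(y)$ at $f=\dirac{x}$ pointwise in $y$, whereas you identify $\pi(\dirac{x})$ by duality as the pushforward of Haar measure. You also justify the finiteness of the orbit, which the paper simply asserts.
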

\begin{proof}
    Since $\lac{x}{K}$ is finite, it follows by the orbit stabiliser theorem that $K/\stab{x}{K}$ is finite and hence
    \begin{align*}
        \pi(f)(y)
        = \int_K f(\lac{y}{k}) \dee k
        = \frac{1}{\card{K/\stab{y}{K}}} \sum_{l \stab{y}{K} \in K/\stab{y}{K}} f(\lac{y}{l})
        = \frac{1}{\card{\lac{y}{K}}} \sum_{z \in \lac{y}{K}} f(z)
    \end{align*}
    for $f \in \lo{\Gamma}$.
    Thus we have
    \begin{equation*}
        \pi(\dirac{x})(y)
        = \frac{1}{\card{\lac{y}{K}}} \sum_{z \in \lac{y}{K}} \dirac{x}(z)
        = \frac{1}{\card{\lac{x}{K}}} \indic{\lac{x}{K}}(y) \qedhere
    \end{equation*}
\end{proof}
In particular, for a discrete group $\Gamma$, the Haar measure $\haar_H$ on $H$ is given by
\begin{equation}
    \haar_H(\pi(\dirac{x})) = \card{\lac{x}{K}}
\end{equation}
which follows as an application of \cite[Proposition 2.14]{muruganandam2}.

Suppose now that $G$ is compact.
There is a special case in which $K$ is the inner automorphisms of $G$.
In this case, we have that the algebra $\kLo{G}{K}$ aligns precisely with the algebraic centre of $\Lo{G}$.
In a similar manner, $\kfalg{G}{K}$ coincides with \emph{central Fourier algebra} $\cfalg{G}$ as originally defined and studied by Alaghmandan and Spronk \cite{alaghmandan-spronk}.
Note though this algebra has no connection with the centre of $\falg{G}$, which is of course trivial.

In particular, we are interested in algebras of the form $\cfalg{\sdp{G}{K}}$.
If we assume that $G$ is abelian, we may utilise \cref{thm:hyper-herz} to find that $\kfalg{G}{K}$ is a natural quotient of $\cfalg{\sdp{G}{K}}$.
\begin{corollary}
    \label{thm:cfalg-quot-kfalg}
    Let $G$ be an abelian compact group, and $K$ a compact group acting on $G$.
    Then
    \begin{equation*}
        \restr{\cfalg{\sdp{G}{K}}}{G} = \kfalg{G}{K}
    \end{equation*}
\end{corollary}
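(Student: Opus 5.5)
The plan is to apply \cref{thm:hyper-herz} to the ambient group $\sdp{G}{K}$, with $G$ playing the role of the closed invariant subgroup $N$. Since $\sdp{G}{K}$ is compact, $\cfalg{\sdp{G}{K}}$ is the $\Inn{\sdp{G}{K}}$-central Fourier algebra, namely $\kfalg{\sdp{G}{K}}{L}$ where $L$ is the compact group $\sdp{G}{K}$ acting on itself by conjugation. The subgroup $G$ is normal in $\sdp{G}{K}$, hence closed and $L$-invariant. \cref{thm:hyper-herz} therefore gives
\begin{equation*}
    \restr{\cfalg{\sdp{G}{K}}}{G} = \kfalg{G}{L},
\end{equation*}
where $\kfalg{G}{L}$ denotes the $L$-radial elements of $\falg{G}$ under the restricted conjugation action.

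It then remains to identify $\kfalg{G}{L}$ with $\kfalg{G}{K}$. I would show that the $L$-orbits on $G$ coincide with the $K$-orbits, which makes the two spherical projectors agree.
\begin{itemize}
    \item Write elements of $\sdp{G}{K}$ as $(g,k)$. Conjugating $(x,e)$ by $(g,k)$ gives $(g + \lac{x}{k} - g, e) = (\lac{x}{k}, e)$, because $G$ is abelian.
    \item Hence the $L$-orbit of $x$ is exactly $\lac{x}{K}$.
    \item More precisely, the action of $L$ on $G$ factors through the continuous surjection $L \to K$, $(g,k) \mapsto k$.
    \item This surjection pushes the Haar measure of $L$ forward to the Haar measure of $K$.
\end{itemize}
So for $f \in \falg{G}$,
\begin{equation*}
    \int_L f(\lac{x}{l}) \dee l = \int_K f(\lac{x}{k}) \dee k,
\end{equation*}
and the two projectors of the form \eqref{eq:action-spherical-proj} coincide. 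Their fixed spaces are then equal, which gives the corollary.

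The only genuinely nontrivial ingredient, surjectivity, is already handled by Herz's restriction theorem through \cref{thm:hyper-herz}. Two details need a little care. The first is that the Haar measure of a quotient is the pushforward of the Haar measure. The second is that \cref{thm:hyper-herz} was stated for an action of a compact group $K$, and here we apply it to the compact group $L$ acting by inner automorphisms. Both are standard. I expect no real obstacle beyond bookkeeping the conventions for the semidirect product action.
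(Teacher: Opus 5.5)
Your proposal is correct and matches the paper's proof: apply \cref{thm:hyper-herz} with $N = G$ inside $\sdp{G}{K}$ acting on itself by conjugation, then use commutativity of $G$ to see that conjugation by $(g,k)$ acts on $G$ exactly as $k$ does, so the two spherical projectors coincide. Your remark that the Haar measure of $\sdp{G}{K}$ pushes forward to that of $K$ spells out a step the paper leaves implicit.
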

\begin{proof}
    By \cref{thm:hyper-herz}, we have that $\restr{\cfalg{\sdp{G}{K}}}{G} = \kfalg{G}{\sdp{G}{K}}$.
    However, since $G$ is abelian, observe that
    \begin{equation*}
        (gk)h(gk)^{-1}
        = g (\lac{h}{k}) \inv{g}
        =\lac{h}{k}
    \end{equation*}
    for $g,h \in G$ and $k \in K$.
    If we let $\pi_{\sdp{G}{K}}$ and $\pi_K$ be the associated spherical projectors, we then have that $\pi_{\sdp{G}{K}}(gk) = \pi_K(k)$, and so we obtain that $\kfalg{G}{\sdp{G}{K}} = \kfalg{G}{K}$.
\end{proof}

\newcommand{\dpi}{\hat{\pi}}
Let us return to the general locally compact setting, but now suppose that $G$ is an \emph{abelian} group (in the examples we consider, $G$ will either be compact or discrete).
In such a scenario, define the \emph{dual action} of $K$ on $\pdual{G}$ by
\begin{equation}
    \dup{\lac{\chi}{k}}{x} \deq \dup{\chi}{\lac{x}{\inv{k}}}
\end{equation}
for $x \in G$, $\chi \in \pdual{G}$, and $k \in K$.
The actions $K \actson G$ and $K \actson \pdual{G}$ then have associated orbit hypergroups which shall be denoted by $H$ and $\hdual{H}$ respectively.
The symbols $\pi$ and $\dpi$ will denote the corresponding spherical projectors.

As suggested by this notation, it will be shown that the Pontryagin dual of $H$ is indeed $\hdual{H}$.
Hartmann and Lasser \cite{hartmann-lasser} prove that the dual of $H$ can be identified with $\extrem{G,K}$, the extreme points of $\pdefu{G,K}$, where $\pdefu{G,K}$ consists of the unit norm positive-definite $K$-invariant functions on $G$.
Moreover, they show that the hyperproduct is given by pointwise multiplication.
In particular, any probability measure on $\extrem{G,K}$ corresponds to some $\pdefu{G,K}$ function.
This justifies the notation above.
\begin{theorem}
    With $H$ and $\hdual{H}$ as above, the dual of $H$ is in fact $\hdual{H}$.
\end{theorem}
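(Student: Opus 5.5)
We will use the result of Hartmann and Lasser: the dual of $H$ is identified with $\extrem{G,K}$, the extreme points of $\pdefu{G,K}$, with hyperproduct given by pointwise multiplication. So the proof reduces to three steps. First, identify $\extrem{G,K}$ with the orbit space $\orbits{\pdual{G}}{K}$. Second, check that under this identification pointwise multiplication agrees with the orbit hyperproduct of $\hdual{H}$. Third, check that the topologies agree.

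For the first step, to each orbit $\lac{\chi}{K}$ we attach the function
\begin{equation*}
    \phi_\chi(x) \deq \int_K \dup{\lac{\chi}{k}}{x} \dee k = \pi(\chi)(x).
\end{equation*}
This is continuous, $K$-invariant, positive-definite (it is an average of characters) and satisfies $\phi_\chi(e) = 1$, so $\phi_\chi \in \pdefu{G,K}$. Conversely, take $\phi \in \pdefu{G,K}$. By Bochner's theorem $\phi = \ft{\mu}$ for a unique $\mu \in \prob{\pdual{G}}$. $K$-invariance of $\phi$ together with uniqueness of $\mu$ gives that $\mu$ is $K$-invariant for the dual action, so $\dpi(\mu) = \mu$.

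Disintegrating $\mu$ along the quotient map $\pdual{G} \to \orbits{\pdual{G}}{K}$ then writes
\begin{equation*}
    \phi = \int \phi_\chi \dee \nu(\lac{\chi}{K})
\end{equation*}
for a probability measure $\nu$ on the orbit space. Hence $\phi$ is extreme exactly when $\nu$ is a point mass, that is, when $\mu = \dpi(\dirac{\chi})$ is the normalised orbit measure. Distinct orbits give distinct orbit measures, so injectivity follows from uniqueness in Bochner's theorem. This establishes a bijection $\orbits{\pdual{G}}{K} \to \extrem{G,K}$, $\lac{\chi}{K} \mapsto \phi_\chi$. Equivalently, $\phi \mapsto \mu$ maps $\pdefu{G,K}$ affinely onto $\prob{\pdual{G}} \cap \kmeas{\pdual{G}}{K}$, whose extreme points are the orbit measures; this is the cleanest way to state the argument.

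For the second step, the Fourier–Stieltjes transform carries products to convolutions:
\begin{equation*}
    \phi_\chi \phi_\psi = \ft{\dpi(\dirac{\chi})} \, \ft{\dpi(\dirac{\psi})} = \big(\dpi(\dirac{\chi}) * \dpi(\dirac{\psi})\big)^{\wedge}.
\end{equation*}
By \SH{2}, and the identification $\meas{\hdual{H}} = \kmeas{\pdual{G}}{K}$, the convolution $\dpi(\dirac{\chi}) * \dpi(\dirac{\psi})$ is precisely the hyperproduct $\lac{\chi}{K} * \lac{\psi}{K}$ in $\hdual{H}$. Its expansion as a mixture of orbit measures is then exactly the decomposition of $\phi_\chi\phi_\psi$ into elements of $\extrem{G,K}$. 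For the involution, $\conj{\phi_\chi} = \phi_{\conj{\chi}}$, which matches the involution on $\hdual{H}$.

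The third step, matching topologies, is the main obstacle. We must show that uniform convergence on compacta of the $\phi_\chi$ corresponds to convergence in the quotient topology of $\orbits{\pdual{G}}{K}$. One direction is clear: if $\chi_i \to \chi$ uniformly on compacta, compactness of $K$ and joint continuity of the action give $\phi_{\chi_i} \to \phi_\chi$ uniformly on compacta. For the converse I would argue via measures. Convergence $\phi_{\chi_i} \to \phi_\chi$ uniformly on compacta, with all values bounded by $1$, gives weak$^*$ convergence of the probability measures $\dpi(\dirac{\chi_i}) \to \dpi(\dirac{\chi})$, by the Lévy continuity theorem on LCA groups. Weak$^*$ convergence of orbit measures then forces the orbits to converge in the Michael topology, which the earlier proposition identifies with the quotient topology. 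Finally, we note that the Haar measure on $\hdual{H}$ matches the Plancherel measure, in the sense needed for $H$ to be a strong hypergroup, although the statement itself only requires the hypergroup identification.
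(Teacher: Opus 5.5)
Your proposal is correct and follows essentially the same route as the paper. Bochner's theorem together with $K$-invariance identifies $\pdefu{G,K}$ affinely with $\prob{\hdual{H}}$; you get the $K$-invariance from uniqueness of the Bochner measure, where the paper cites its lemma on $K$-radial Fourier--Stieltjes transforms. Extreme points then correspond to orbit measures, and convolution becomes pointwise multiplication. Your explicit extreme-point analysis and your check that the quotient topology agrees with uniform convergence on compacta fill in details that the paper's short proof leaves implicit.
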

\begin{proof}
    Recall that $\meas{\hdual{H}}$ can be identified with $\kmeas{\pdual{G}}{K}$, and consider the Fourier-Stieltjes transform $\mathcal{F} : \meas{\pdual{G}} \to \cbdd{G}$.
    Bochner's Theorem and \cref{thm:mu-radial} below show that $\mathcal{F}(\prob{\hdual{H}}) = \pdefu{G,K}$.
    This map preserves the extreme points and the hyperproduct (as convolutions become multiplication) so it follows that $\hdual{H}$ is isomorphic to $\extrem{G,K}$.
\end{proof}
\begin{lemma}
    \label{thm:mu-radial}
    Given $K \actson G$, we have that $\mu \in \meas{G}$ is $K$-radial if and only if its Fourier-Stieltjes transform $\ft{\mu} \in \cbdd{\pdual{G}}$ is $K$-radial.
\end{lemma}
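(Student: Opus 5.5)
The plan is to establish an intertwining identity between the action of $K$ on measures and the dual action on Fourier--Stieltjes transforms, and then average over $K$. For $k \in K$ and $\mu \in \meas{G}$, let $\lac{\mu}{k}$ denote the pushforward of $\mu$ under $x \mapsto \lac{x}{k}$. Since $\pi(\mu) = \int_K \lac{\mu}{k} \dee k$ is understood weak$^*$, we have $\int_G f \dee \pi(\mu) = \int_K \int_G f(\lac{x}{k}) \dee\mu(x) \dee k$ for $f \in \cvan{G}$. For $\nu = \ft{\mu}$ the projection $\dpi$ acts as in \eqref{eq:action-spherical-proj}, namely $\dpi(\ft{\mu})(\chi) = \int_K \ft{\mu}(\lac{\chi}{k}) \dee k$.

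First I will compute the transform of a single translate. Using the convention $\ft{\mu}(\chi) = \int_G \conj{\dup{\chi}{x}} \dee\mu(x)$ and the definition $\dup{\lac{\chi}{k}}{x} = \dup{\chi}{\lac{x}{\inv{k}}}$, a change of variables gives
\begin{equation*}
    \ft{\lac{\mu}{k}}(\chi) = \int_G \conj{\dup{\chi}{\lac{x}{k}}} \dee\mu(x) = \int_G \conj{\dup{\lac{\chi}{\inv{k}}}{x}} \dee\mu(x) = \ft{\mu}(\lac{\chi}{\inv{k}}).
\end{equation*}

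Next I will integrate this identity over $K$ and apply Fubini. Fubini is legitimate because the integrand $(k,x) \mapsto \conj{\dup{\chi}{\lac{x}{k}}}$ is bounded and jointly continuous, and $\abs{\mu}$ is finite. Using the invariance of Haar measure on the compact group $K$ under $k \mapsto \inv{k}$, this yields
\begin{equation*}
    \ft{\pi(\mu)}(\chi) = \int_K \ft{\mu}(\lac{\chi}{\inv{k}}) \dee k = \dpi(\ft{\mu})(\chi),
\end{equation*}
so $\ft{\pi(\mu)} = \dpi(\ft{\mu})$. Both equivalences then follow. If $\pi(\mu) = \mu$, then $\dpi(\ft{\mu}) = \ft{\mu}$. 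Conversely, if $\dpi(\ft{\mu}) = \ft{\mu}$, then $\ft{\pi(\mu)} = \ft{\mu}$, and the uniqueness theorem for the Fourier--Stieltjes transform gives $\pi(\mu) = \mu$.

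The main technical point is the measure-theoretic justification: that $\pi$ on $\meas{G}$, defined as the adjoint of $\pi_0$ on $\cvan{G}$, agrees with the weak$^*$ integral $\int_K \lac{\mu}{k} \dee k$, and that the characters, which are not in $\cvan{G}$ when $G$ is noncompact, may be integrated against it. I would handle this by noting that for bounded continuous $f$ the map $k \mapsto \int f(\lac{x}{k}) \dee\mu(x)$ is continuous, by dominated convergence and continuity of the action. Approximating $f$ by $\cvan{G}$ functions uniformly on compacta, with uniform bounds, then extends the defining identity from $\cvan{G}$ to $\cbdd{G}$, again using the finiteness of $\abs{\mu}$. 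The remaining manipulations are routine.
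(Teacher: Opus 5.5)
Your proposal is correct and follows essentially the same route as the paper. Both arguments establish $\ft{\pi(\mu)} = \hat{\pi}(\ft{\mu})$ using Fubini and the inversion-invariance of Haar measure on $K$, and then conclude by injectivity of the Fourier--Stieltjes transform. You also fill in two steps the paper leaves implicit: extending the duality $\int_G \pi(f) \dee\mu = \int_G f \dee\pi(\mu)$ from $\cvan{G}$ to bounded continuous $f$ such as characters, and invoking the uniqueness theorem explicitly.
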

\begin{proof}
    We let $\pi$ and $\dpi$ be the spherical projectors of $K \actson G$ and $K \actson \pdual{G}$ respectively.
    For $\mu \in \meas{G}$, observe that
    \begin{align*}
        \dpi(\ft{\mu})(\chi)
        &= \int_K \ft{\mu}(\lac{\chi}{k}) \dee k \\
        &= \int_K \int_G \conj{\dup{\lac{\chi}{k}}{x}} \dee \mu(x) \dee k \\
        &= \int_K \int_G \conj{\dup{\chi}{\lac{x}{\inv{k}}}} \dee \mu(x) \dee k
    \end{align*}
    and if we let $f_\chi(x) = \conj{\dup{\chi}{x}}$, then
    \begin{align*}
        \dpi(\ft{\mu})(\chi)
        &= \int_G \int_K f_\chi(\lac{x}{\inv{k}}) \dee k \dee \mu(x) \\
        &= \int_G \pi(f_\chi)(x) \dee \mu(x) \\
        &= \int_G f_\chi(x) \dee \pi(\mu)(x) \\
        &= \int_G \conj{\dup{\chi}{x}} \dee \pi(\mu)(x)
        = \ft{\pi(\mu)}(\chi)
    \end{align*}
    and from this, it is clear that $\mu$ is $K$-radial if and only if $\ft{\mu}$ is $K$-radial.
\end{proof}

When the modular function on $G$ is $K$-radial, we say that $H$ is \define{ultraspherical}.
Muruganandam \cite[Theorem 3.13]{muruganandam2} shows that all ultraspherical hypergroups are regular Fourier hypergroups, and that moreover, $\kfalg{G}{K} = \falg{H}$.
In particular, this statement always holds for commutative orbit hypergroups.
Furthermore, by \cite[Proposition 2.14]{muruganandam2}, one can show that $\kLo{G}{K} = \Lo{H}$, from which we obtain
\begin{equation}
    \Lo{H} = \kLo{G}{K} = \kfalg{\pdual{G}}{K} = \falg{\hdual{H}}
\end{equation}
where the middle equality holds via the Fourier transform, following a similar argument as given in \cref{thm:mu-radial}.
This holds more generally for strong commutative hypergroups, in that we always have $\Lo{H} = \falg{\hdual{H}}$, see \cite[Proposition 4.2]{muruganandam1}.
With this duality established, we shall attempt to understand the amenability $\falg{H}$ by working with $\cLo{\hdual{H}}$ directly.

\subsection{Amenability criteria}
\sepfootnotecontent{alaghmandan-sketch}{
}

Skantharajah \cite{skantharajah} introduced the notions of amenability via the Reiter conditions $(P_p)$ for hypergroups.
These conditions closely mimic their group analogues, and may be found in Skantharajah's original paper.

A class of Leptin conditions for hypergroups was introduced by Alaghmandan \cite{alaghmandan.leptin}.
In particular, the \emph{1-Leptin condition} $(L_1)$ is defined as follows (recall the definition for the hyperproduct of sets in \eqref{eq:set-hypproduct}).
\begin{definition}
    A hypergroup $H$ satisfies the \define{1-Leptin $(L_1)$} condition if for every compact $C \subseteq H$ and $\varepsilon > 0$, there exists a measurable $V \subseteq H$ such that $0 < \haar_H(V) < \infty$ with $\haar_H(C*V)/\haar_H(V) < 1 + \varepsilon$.
\end{definition}
It was originally shown by Singh \cite[Proposition 4.4.3]{singh} and more generally by Alaghmandan \cite[Proposition 4.1 and Theorem 4.4]{alaghmandan} that $(L_1)$ implies $(P_2)$ for discrete Fourier hypergroups\footnote{
For the curious, Alaghmandan also presents a nice diagram below Example 4.5 summarising some known implications of amenability conditions of discrete Fourier hypergroups.}.
When a hypergroup is $(P_2)$, we have a necessary condition for the amenability of $\Lo{H}$.
This result is stated below, and is presented as Theorem 5.1 in that same paper.
\begin{theorem}[Alaghmandan]
    \label{thm:alaghmandan}
    Let $H$ be a discrete commutative hypergroup which satisfies $(L_1)$.
    If $\lo{H}$ is amenable, then there is some $M \geq 1$ such that $\set{x \in H : \haar_H(x) \leq M}$ is infinite.
\end{theorem}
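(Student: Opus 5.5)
This is Alaghmandan's theorem (Theorem 5.1 of his paper), and I would reproduce the argument by contradiction. Assume $\lo{H}$ is amenable and that for every $M \geq 1$ the set $\set{x \in H : \haar_H(x) \leq M}$ is finite, so $\haar_H(x) \to \infty$ along $H$. Since $H$ is commutative and discrete, $\lo{H}$ is a commutative Banach algebra with Gelfand spectrum $\hdual{H}$, a compact space. The plan is to use a bounded approximate diagonal to produce uniformly bounded idempotent-like functions, and to show that the growth of the Haar weights contradicts this bound.

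\textbf{Step 1.} Amenability gives a bounded approximate diagonal $(m_\alpha)$ in $\lo{H} \widehat{\otimes} \lo{H} = \lo{H \times H}$, with $\sup_\alpha \norm{m_\alpha} \leq C$. Composing with the diagonal and using the Gelfand transform, $\Delta^*\ft{m_\alpha}(\chi,\psi)$ tends to the indicator of the diagonal $\set{(\chi,\chi)}$ of $\hdual{H}\times\hdual{H}$ in a pointwise, boundedly controlled sense. Because we only know this weakly, I would pair against test functions. By $(L_1)$, and hence $(P_2)$ via Singh and Alaghmandan, the trivial character lies in the support of the Plancherel measure. This lets me realise the pairing inside $\Li{\hdual{H}} \otimes \Li{\hdual{H}}$, using Plancherel in the form $\lp{2}{H} \cong \Lp{2}{\hdual{H}}$.

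\textbf{Step 2 (the key estimate).} Apply the identity-plus-flip map and compute norms in $\lo{H}$ against the weight $\haar_H$. For a point $x$, the $\lo{}$-norm of $\dirac{x}$ in the Haar normalisation is $\haar_H(x)$. The "diagonal" functional coming from $m_\alpha$ evaluated at $x$ (roughly $\sum_y m_\alpha(x*y, \invo{y})$) must be close to $1$ on average. By the hypergroup convolution, however, its contribution is damped by a factor $\haar_H(x)^{-1}$, in the way that $\lo{}$-norms of the normalised characteristic functions $\frac{1}{\haar_H(x)}\indic{x}$ interact with the twisted product. Summing over a F\o lner-type set $V$ from $(L_1)$ (taking $C$ to be a finite set of points), the average of these quantities is at least $1-\varepsilon$. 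The damping, in contrast, forces the average to be at most $C \cdot \max_{x\in F} \haar_H(x)^{-1/2}$ plus the mass outside a finite set $F$. I expect this to go via a Cauchy--Schwarz argument with $(P_2)$ vectors $\xi\in\lp{2}{H}$ almost invariant under translation.

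\textbf{Step 3.} Choose $M$ large so that all $x$ outside the finite set $\set{\haar_H\leq M}$ contribute at most $C M^{-1/2}$. The remaining finite set has vanishing proportion in the $(P_2)$ net, since almost invariant vectors escape to infinity when $H$ is infinite. Together these give $1-\varepsilon \leq CM^{-1/2}+\varepsilon$, a contradiction.

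The main obstacle is Step 2: making precise how the bounded diagonal interacts with hypergroup translation so that the weight $\haar_H(x)$ appears as a genuine damping factor. I would first try the case where $m_\alpha$ is supported on pairs $(x,\invo{x})$, and then reduce the general case to it by averaging against the $(P_2)$ net.
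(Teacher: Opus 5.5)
The paper gives no proof of its own here. It imports the result as Theorem 5.1 of Alaghmandan, after using $(L_1)\Rightarrow(P_2)$ from Singh and Alaghmandan, so your sketch has to stand on its own. It has a genuine gap exactly where you suspect.

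\textbf{Step 2 is not an argument.} The ``identity-plus-flip'' map and the functional $\sum_y m_\alpha(x*y,\invo{y})$ are never made precise. Nothing shows that this functional averages to roughly $1$. The damping is asserted rather than derived: it is $\haar_H(x)^{-1}$ in one sentence and $\haar_H(x)^{-1/2}$ in the next, with an unspecified Cauchy--Schwarz step.

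\textbf{Step 3 conflates two objects.} What must become negligible on the finite set $\set{\haar_H\le M}$ is the coefficient mass of the diagonal $m_\alpha$. ``Almost invariant vectors escape to infinity'' is instead a statement about the $(P_2)$ net, and you give no inequality tying $m_\alpha$ to that net.

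\textbf{Infinitude of $H$ is used silently.} For finite $H$, $(L_1)$ holds trivially, $\lo{H}\cong\C^{\card{H}}$ is amenable, and every $\set{\haar_H\le M}$ is finite. So the statement tacitly assumes $H$ is infinite, and a proof must visibly use this.

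\textbf{The missing idea} is where $\haar_H$ really enters, namely the structure constants. Translation by $x$ is adjoint to translation by $\invo{x}$ with respect to $\haar_H$, which gives $\haar_H(y)\,(\dirac{x}*\dirac{y})(\set{z})=\haar_H(z)\,(\dirac{\invo{x}}*\dirac{z})(\set{y})$. Hence $(\dirac{x}*\dirac{y})(\set{z})\le \haar_H(z)/\haar_H(y)$. For $z=e$ this is the exact factor $(\dirac{x}*\dirac{\invo{x}})(\set{e})=\haar_H(x)^{-1}$, not $\haar_H(x)^{-1/2}$.

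Suppose every $\set{\haar_H\le M}$ is finite. Then for each $z$ the function $(x,y)\mapsto(\dirac{x}*\dirac{y})(\set{z})$ lies in $\cvan{H\times H}$. Indeed, $y$ is confined to a finite set by the bound above, and then $x$ is too, because $z\in\supp(\dirac{x}*\dirac{y})$ iff $x\in\supp(\dirac{z}*\dirac{\invo{y}})$. Consequently the product map $\Pi:\lo{H\times H}\to\lo{H}$ is weak$^*$-continuous on bounded sets, where $\lo{H\times H}=\cvan{H\times H}^*$. The module actions are also weak$^*$-continuous on bounded sets, since their coefficient functionals are finitely supported.

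A weak$^*$ cluster point $D$ of $(m_\alpha)$ is then an honest diagonal: $a\cdot D=D\cdot a$ and $\Pi(D)=\dirac{e}$. Let $\Omega$ be the compact spectrum of $\lo{H}$. The Gelfand transform $\widehat{D}$ is continuous on $\Omega\times\Omega$. It vanishes off the diagonal, because $(\widehat{a}(\phi)-\widehat{a}(\psi))\widehat{D}(\phi,\psi)=0$, and equals $\phi(\Pi(D))=1$ on it. Hence $\Omega$ is discrete and therefore finite, and semisimplicity of $\lo{H}$ forces $H$ to be finite.

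This closes the argument for infinite $H$ without Plancherel theory or almost invariant vectors. In fact it does not use $(L_1)$ at all.
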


\section{Examining the affine group \texorpdfstring{$G = \sdp{\Ri^d}{\GL{\Ri}{d}}$}{G = R \textasciicircum d ⋊ GLd(R)}}
\label{sec:affine}

\sepfootnotecontent{poweroverload}{
    As a word of caution, we will be overloading the superscript notation.
    It may mean the algebraic products of ideals, such as in $\Mi^n$, but it may also mean the Cartesian product, such as in $\Ri^d$.
    However, there should be sufficient context to determine which interpretation is in use, so no ambiguity should arise.
}

\subsection{The affine group \texorpdfstring{$G$}{G}}
We now present an application of these results to the central Fourier algebra.
We shall borrow terminology from the theory of local fields, wherein we let $\Fi$ be a non-Archimedean local field, $\Ri$ its ring of integers (a compact discrete valuation ring), and $\Mi$ the unique maximal ideal of $\Ri$.
For readers unfamiliar with this terminology, an example from the $p$-adics suffices without any real loss of generality.
One may assume that $\Fi = \QP$ the $p$-adic numbers, $\Ri = \OP$ the $p$-adic integers, and $\Mi^n = p^n \OP$.\sepfootnote{poweroverload}
Another example is the formal power series over a finite field $\F$, where we set $\Ri = \F[[X]]$, $\Fi = \F((X))$ (the Laurent series over $\F$), and then $\Mi$ consists of polynomials whose constant term is zero with $\Mi^n = X^n \F[[X]]$.

Consider now the action of $K_d = \GL{\Ri}{d}$ on $G_d = \Ri^d$
When $d=1$, one quickly observes that the orbit structure of the action $\Ri^* \actson \Ri$ has a ``layering'': it keeps invariant the nested sets $\Mi^n$, so that the orbits are the shells $\Mi^n \setminus \Mi^{n+1}$.
There is a similar structure in the dual action, so let us give this a name.
\begin{definition}\leavevmode
    \begin{enumerate}[label=(\alph*)]
        \item Let $K$ be a compact group acting on a compact group $G$.
            We say that $K \actson G$ is \define{(compactly) layered by $G_n$} if $G_n$ is a decreasing chain of subgroups whose intersection is trivial, such that the orbits of $K \actson G$ are precisely the sets $A_n \deq G_{n} \setminus G_{n+1}$ and $\set{e}$.
        \item 
            Let $K$ be a compact group acting on a discrete group $\Gamma$.
            We say that $K \actson \Gamma$ is \define{(discretely) layered by $\Gamma_n$} if $\Gamma_n$ is an increasing chain of subgroups whose union is all of $\Gamma$, such that the orbits of $K \actson \Gamma$ are precisely the sets $B_n \deq \Gamma_{n} \setminus \Gamma_{n-1}$.
            For convenience, we assume that $\Gamma_0 = \set{e}$ and $\Gamma_{-1} = \emptyset$.
    \end{enumerate}
\end{definition}
As mentioned, it is straightforward to observe that $K_d \actson G_d$ is compactly layered, and its dual is discretely layered, when $d = 1$.
Let us show that this still holds for arbitrary $d$.
In this case, the layering subgroups will be of the form $\Mi_d^n \deq \set{(x_1, \ldots, x_d) \in \Ri^d : x_i \in \Mi^n}$.
We begin with the following lemma.

\begin{lemma}
    \label{thm:orbits-fd}
    The orbits of the action of $\GL{\Ri}{d}$ on the vector space $\Fi^d$ are precisely the sets $\set{0}$ and $\Mi_d^n \setminus \Mi_d^{n+1}$ for $n \in \Z$.
\end{lemma}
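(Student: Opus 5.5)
We prove the two inclusions separately: first that each set $\Mi_d^n \setminus \Mi_d^{n+1}$ is invariant under $\GL{\Ri}{d}$, and then that $\GL{\Ri}{d}$ acts transitively on it. Here $\Mi_d^n$ is understood inside $\Fi^d$ for all $n \in \Z$, so $\Mi_d^n = \varpi^n \Ri^d$ for a fixed uniformiser $\varpi$ of $\Mi$. The sets $\Mi_d^n \setminus \Mi_d^{n+1}$ together with $\set{0}$ partition $\Fi^d$. To see this, put $v(x) = \min_i v(x_i)$ for $x \in \Fi^d$, where $v$ is the discrete valuation of $\Fi$; then $x \in \Mi_d^n \setminus \Mi_d^{n+1}$ exactly when $v(x) = n$. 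Since $0$ is clearly a fixed point, it remains to show that the orbits of the nonzero vectors are precisely the level sets of $v$.

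\emph{Invariance.} Each $\Mi_d^n = \varpi^n\Ri^d$ is an $\Ri$-submodule of $\Fi^d$. A matrix $g \in \GL{\Ri}{d}$ has entries in $\Ri$, so $g\Mi_d^n \subseteq \Mi_d^n$. The inverse $\inv{g}$ also lies in $\GL{\Ri}{d}$, so $\inv{g}\Mi_d^n \subseteq \Mi_d^n$ as well, which gives $g\Mi_d^n = \Mi_d^n$ for every $n$. Consequently $g$ maps $\Mi_d^n \setminus \Mi_d^{n+1}$ bijectively onto itself, i.e.\ $v(gx) = v(x)$.

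\emph{Transitivity.} It suffices to show that every $x$ with $v(x) = n$ lies in the orbit of $\varpi^n e_1$. Write $x = \varpi^n y$ with $y \in \Ri^d$ and $v(y) = 0$. Then some coordinate $y_j$ is a unit, and we must find $g \in \GL{\Ri}{d}$ with $g e_1 = y$, since linearity then gives $g(\varpi^n e_1) = x$. To build $g$, take its columns to be $y$ followed by the standard basis vectors $e_i$ for $i \neq j$. Expanding the determinant along the rows indexed by $i \neq j$ shows that it equals $\pm y_j$, which is a unit of $\Ri$. A matrix over $\Ri$ with unit determinant is invertible over $\Ri$ by the adjugate formula, so $g \in \GL{\Ri}{d}$, as required.

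The only step that needs care is the transitivity step: producing an element of $\GL{\Ri}{d}$ that carries $e_1$ to a given primitive vector. The key point is the choice of columns above, which makes the determinant equal to $\pm y_j$ and hence a unit. Both steps are elementary and use only that $\Ri$ is a local ring whose maximal ideal $\Mi$ is principal.
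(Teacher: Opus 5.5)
Your proposal is correct and follows essentially the same route as the paper: invariance of the $\Mi_d^n$, then transitivity by sending $e_1$ to a primitive vector via a matrix whose other columns are standard basis vectors and whose determinant is a unit. The only difference is cosmetic. You choose the columns $e_i$ with $i \neq j$ so the determinant is $\pm y_j$ directly, and you scale by $\varpi^n$ explicitly. The paper instead uses permutation matrices to assume $x_1$ is a unit, takes a lower-triangular matrix with determinant $x_1$, and reduces to $n=0$ without loss of generality.
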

\begin{proof}
    It is straightforward to see that the sets $\Mi_d^{n}$ are invariant under the action of $K$.
    So it remains to show is that their differences are in fact full orbits.
    Without loss of generality, it suffices to prove that $\Mi_d^0 \setminus \Mi_d^1 = \Ri^d\setminus \Mi_d^1$ is an orbit.
    Set $e_1 \in \Ri^d$ to be the usual basis element, and take $x = (x_1, \ldots, x_d) \in \Ri^d \setminus \Mi_d^1$.
    Since $K$ contains the permutation matrices, we may assume without loss of generality that $x_1 \in \Ri \setminus \Mi$.
    Now let $A \in K$ be the matrix
    \begin{equation*}
        A = \begin{bmatrix}
            x_1 & 0 & \cdots & 0 \\
            x_2 & 1 & \cdots & 0 \\
            \vdots & \vdots & \ddots & \vdots \\
            x_d & 0 & \cdots & 1 
        \end{bmatrix}
    \end{equation*}
    where $A$ is invertible since $x_1 \in \Riu$.
    Clearly $A e_1 = x$, and so $x$ is in the orbit of $e_1$.
    By transitivity, we have that $\Ri^d\setminus \Mi_d^1$ is an orbit.
\end{proof}
\begin{proposition}
    \label{thm:dvr-dual-layering}
    $K_d \actson G_d$ is compactly layered and $K_d \actson \pdual{G_d}$ is discretely layered.
\end{proposition}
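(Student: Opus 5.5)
The plan is to treat the two halves separately: the compact half follows almost directly from \cref{thm:orbits-fd}, and the discrete half reduces to it once $\pdual{G_d}$ is identified concretely.

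\textbf{The compact half.} Take $G_n \deq \Mi_d^n$ for $n \geq 0$. These are clearly a decreasing chain of closed subgroups of $\Ri^d$. Their intersection is $\set{0}$, since $\bigcap_n \Mi^n = \set{0}$ in a discrete valuation ring. The set $\Ri^d = \Mi_d^0$ is $K_d$-invariant, so the orbits of $K_d \actson G_d$ are exactly the orbits of $\GL{\Ri}{d} \actson \Fi^d$ that lie in $\Ri^d$. By \cref{thm:orbits-fd}, these are $\set{0}$ and the shells $A_n = \Mi_d^n \setminus \Mi_d^{n+1}$ for $n \geq 0$, which is precisely compact layering.

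\textbf{Identifying the dual.} Fix a continuous character $\psi$ of the additive group $\Fi$ that is trivial on $\Ri$ but not on $\Mi^{-1}$. For $\Fi = \QP$ take $\psi(x) = e^{2\pi i \{x\}_p}$; in general this is the standard choice of a character of conductor $\Ri$. I will quote the standard self-duality of $\Fi$: $y \mapsto \psi(y\,\cdot)$ identifies $\Fi \cong \pdual{\Fi}$. Under it, the annihilator of $\Ri$ is $\Ri$ itself, so $\pdual{\Ri} \cong \Fi/\Ri$. Taking products gives
\[
\pdual{G_d} \cong \Fi^d/\Ri^d, \qquad \dup{y + \Ri^d}{x} = \psi(y^{T} x).
\]

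\textbf{The dual action.} For $k \in K_d$ we have $\dup{\lac{y}{k}}{x} = \psi(y^T k^{-1} x) = \psi((k^{-T}y)^T x)$. So the dual action is $y + \Ri^d \mapsto k^{-T} y + \Ri^d$. Since $k \mapsto k^{-T}$ is an automorphism of $\GL{\Ri}{d}$, the dual orbits are the images in $\Fi^d/\Ri^d$ of the $\GL{\Ri}{d}$-orbits on $\Fi^d$. Set $\Gamma_n \deq \Mi_d^{-n}/\Ri^d$ for $n \geq 0$. These form an increasing chain of subgroups with $\Gamma_0 = \set{0}$, and their union is $\Fi^d/\Ri^d$ because $\Fi = \bigcup_n \Mi^{-n}$. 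Each $\Gamma_n$ is invariant by \cref{thm:orbits-fd}.

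\textbf{Transitivity on the shells.} Let $n \geq 1$ and take two cosets in $B_n = \Gamma_n \setminus \Gamma_{n-1}$. Any representatives $y, y'$ lie in $\Mi_d^{-n} \setminus \Mi_d^{-n+1}$. This is the key point: membership in $\Mi_d^{-n+1}$ does not depend on the representative, because $\Ri^d \subseteq \Mi_d^{-n+1}$. By \cref{thm:orbits-fd} there is $k \in \GL{\Ri}{d}$ with $k^{-T} y = y'$, and hence $k$ carries $y + \Ri^d$ to $y' + \Ri^d$. So each $B_n$ is a single orbit, and $B_0 = \set{0}$, which is discrete layering.

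I expect the main obstacle to be the identification $\pdual{\Ri^d} \cong \Fi^d/\Ri^d$ together with the transpose-inverse form of the dual action. The rest is bookkeeping on top of \cref{thm:orbits-fd}. I would treat the existence of $\psi$ and self-duality of $\Fi$ as standard facts about local fields, citing them rather than reproving them.
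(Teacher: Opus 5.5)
Your proof is correct and follows the same route as the paper. The paper restricts the orbit description of \cref{thm:orbits-fd} to $\Ri^d$, then uses $\pdual{\Ri^d}\cong\Fi^d/\Ri^d$ with the layering $\Mi_d^{-n}/\Ri^d$. You are more careful than the paper's one-line argument: you check that the dual action is the transpose-inverse action, which has the same orbits since $k\mapsto k^{-T}$ is an automorphism of $\GL{\Ri}{d}$, and that membership in a shell does not depend on the choice of coset representative.
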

\begin{proof}
    In both cases, the layering is given as in \cref{thm:orbits-fd}.
    It suffices to observe that $\Ri^d$ is a subgroup of $\Fi^d$, and so the compact layering is given directly by $\Mi_d^n$.
    For the dual version, note that the dual of $\Ri^d$ is isomorphic to $\Fi^d/\Ri^d$, so the discrete layering is given by $\Mi_d^{-n}/\Ri^d$.
\end{proof}

It is clear that hypergroups from compactly layered actions will satisfy $(L_1)$, given that they are already compact.
The same is true for discretely layered actions, though this needs some justification.
The following proposition provides a slight generalisation of this fact. 
\begin{proposition}
    \label{thm:l1-dual-profinite}
    Let $H$ be the orbit hypergroup of $K \actson G$ for a locally compact group $G$.
    If $G$ is the direct limit of $K$-invariant compact open subgroups $G_n$, then $H$ satisfies $(L_1)$.
\end{proposition}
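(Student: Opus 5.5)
Given a compact $C \subseteq H$ and $\varepsilon > 0$, the plan is to take $V$ to be the image of a single large subgroup $G_n$. Then $C * V = V$, so the ratio in $(L_1)$ is exactly $1$.

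First, let $q : G \to H$ be the quotient map $x \mapsto O_x$. Since each $G_n$ is $K$-invariant, $G_n$ is a union of orbits. Hence $V_n \deq q(G_n)$ is a subset of $H$ with $q^{-1}(V_n) = G_n$. Because $G_n$ is compact and open and $q$ is continuous and open, $V_n$ is compact and open in $H$. The Haar measure of the orbit hypergroup is the pushforward of $\haar_G$ (via \cite[Proposition 2.14]{muruganandam2}; in the discrete case this is the formula $\haar_H(\pi(\dirac{x})) = \card{\lac{x}{K}}$ above). So $\haar_H(V_n) = \haar_G(G_n)$, which lies in $(0,\infty)$ because $G_n$ is a nonempty compact open set.

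Second, cover $C$ by the sets $V_n$. As $G = \bigcup_n G_n$ is an increasing union of open sets, the $V_n$ form an increasing open cover of $H$. Compactness of $C$ then gives a single $n$ with $C \subseteq V_n$.

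Third, show $C * V_n \subseteq V_n$. For $x = O_a$ and $y = O_b$ with $a,b \in G_n$, the hyperproduct $x * y$ is the image under $q$ of $\pi(\dirac{a}) * \pi(\dirac{b})$. This measure is supported in $(\lac{a}{K})(\lac{b}{K}) \subseteq G_n G_n = G_n$, using that $G_n$ is a $K$-invariant subgroup. Therefore $\supp(x*y) \subseteq q(G_n) = V_n$, and by \eqref{eq:set-hypproduct} we get $C * V_n \subseteq V_n * V_n \subseteq V_n$. It follows that $\haar_H(C*V_n)/\haar_H(V_n) \le 1 < 1+\varepsilon$.

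The main point needing care is the identification of $\haar_H$ with the pushforward of $\haar_G$, together with the support computation for the hyperproduct via the spherical projector. The support of a convolution of measures lies in the closure of the product of supports, and $G_n$ is closed, so this inclusion holds. With that, the remaining steps are routine.
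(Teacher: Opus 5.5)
Your proposal is correct and follows the same route as the paper: choose a single $G_N$ (viewed in $H$) containing the compact set $C$, observe that $C * G_N \subseteq G_N$ because $G_N$ is a $K$-invariant subgroup, and conclude that $\haar_H(C*G_N)/\haar_H(G_N) \le 1$. Your extra steps only fill in details the paper leaves implicit: openness of the quotient map, $0<\haar_H(V_N)<\infty$, and the support inclusion for the hyperproduct.
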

\begin{proof}
    Let $C \subseteq H$ be compact, which we will consider as a $K$-invariant compact subset of $G$.
    Since the subgroups $G_n$ are open, there must be a sufficiently large $N$ such that $C \subseteq G_N$.
    We then have that $C * G_N \subseteq G_N$, and so it follows that $\haar_H(C * G_N)/\haar_H(G_N) \leq 1$.
\end{proof}

This immediately leads us to the main result for discretely layered actions.
\begin{theorem}
    \label{thm:layered-non-amenability}
    Let $\Gamma$ be a discrete abelian group, and suppose that $K \actson \Gamma$ is a discretely layered action.
    If we let $G = \pdual{\Gamma}$, then $\kfalg{G}{K}$ is not amenable.
    In particular, $\cfalg{\sdp{G}{K}}$ is also not amenable.
\end{theorem}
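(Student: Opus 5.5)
Let $H$ denote the orbit hypergroup of $K \actson \Gamma$; it is discrete and commutative. The plan is to transfer the question from $\kfalg{G}{K}$ to $\lo{H}$ and then apply \cref{thm:alaghmandan}.

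First, the identification. The duality established above gives
\begin{equation*}
    \lo{H} = \kLo{\Gamma}{K} \cong \kfalg{\pdual{\Gamma}}{K} = \kfalg{G}{K},
\end{equation*}
as Banach algebras, via the Fourier transform. So it suffices to show that $\lo{H}$ is not amenable.

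Second, the hypothesis $(L_1)$. Each layer $\Gamma_n$ is a $K$-invariant subgroup, being the union of the orbits $B_0, \ldots, B_n$. I claim each $\Gamma_n$ is finite. Indeed, every orbit of a compact group acting continuously on a discrete space is compact, hence finite, so $\Gamma_n = B_0 \cup \cdots \cup B_n$ is a finite union of finite sets. Thus $\Gamma_n$ is a compact open subgroup, and $\Gamma$ is the direct limit of these. \cref{thm:l1-dual-profinite} then shows that $H$ satisfies $(L_1)$.

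Third, the contradiction. Suppose $\lo{H}$ were amenable. By \cref{thm:alaghmandan} there is some $M \ge 1$ for which $\set{x \in H : \haar_H(x) \le M}$ is infinite. The points of $H$ are exactly the orbits $B_n$, and by the formula following \cref{thm:discrete-hypergroup-dirac} we have $\haar_H(B_n) = \card{B_n}$. So it suffices to show that $\card{B_n} \to \infty$.

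The key step is that $\card{B_n} \ge \card{\Gamma_{n-1}}$ for $n \ge 1$. To see this, fix $b \in B_n$. For every $g \in \Gamma_{n-1}$, the element $b + g$ lies in $\Gamma_n$, and it cannot lie in $\Gamma_{n-1}$, since otherwise $b = (b+g) - g$ would lie in the subgroup $\Gamma_{n-1}$. Hence $b + \Gamma_{n-1} \subseteq B_n$, which gives the inequality.

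Now the growth. Each $B_n$ is nonempty, since it is an orbit. So $\Gamma_{n-1} \subsetneq \Gamma_n$ for every $n$, and $\card{\Gamma_n}$ is strictly increasing. Hence $\card{B_n} \ge \card{\Gamma_{n-1}} \to \infty$. In fact $\card{\Gamma_n} \ge 2^n$, as each $\Gamma_{n-1}$ is a proper subgroup of $\Gamma_n$ and so has index at least $2$. It follows that only finitely many orbits have $\haar_H \le M$, for every $M$. This contradicts the conclusion of \cref{thm:alaghmandan}, so $\kfalg{G}{K}$ is not amenable.

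Finally, the "in particular". By \cref{thm:cfalg-quot-kfalg}, $\kfalg{G}{K}$ is the image of $\cfalg{\sdp{G}{K}}$ under restriction. This is a continuous surjective homomorphism with dense (indeed full) range, and amenability passes to such images. Hence $\cfalg{\sdp{G}{K}}$ is not amenable either.

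The main point requiring care is this last step: checking that restriction is a bounded algebra homomorphism onto $\kfalg{G}{K}$. It is a restriction of functions, so it is multiplicative, and it is contractive on Fourier algebras. The rest of the argument is the simple counting above.
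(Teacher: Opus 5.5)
Your proposal is correct and follows the paper's proof step for step. You identify $\kfalg{G}{K}$ with $\lo{H}$, obtain $(L_1)$ from \cref{thm:l1-dual-profinite}, show $\haar_H(B_n)=\card{B_n}\to\infty$ so that \cref{thm:alaghmandan} rules out amenability, and pass to $\cfalg{\sdp{G}{K}}$ through the restriction map of \cref{thm:cfalg-quot-kfalg}. Along the way you make explicit three points the paper leaves implicit: the finiteness of the $\Gamma_n$ (because the orbits are finite), the coset bound $\card{B_n}\ge\card{\Gamma_{n-1}}$ behind the paper's ``clearly grow without bound'', and the fact that amenability passes along a continuous homomorphism with dense range.
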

\begin{proof}
    Let $H$ denote the orbit hypergroup for $K \actson \Gamma$.
    By \cref{thm:discrete-hypergroup-dirac}, the elements of $H$ are of the form
    \begin{equation}
        b_n 
        = \frac{1}{\card{B_n}} \indic{B_n}
        = \frac{1}{\card{\Gamma_n} - \card{\Gamma_{n-1}}} \left(\indic{\Gamma_{n}} - \indic{\Gamma_{n-1}}\right)
    \end{equation}
    with Haar measure $\haar_H (b_n) = \card{\Gamma_n} - \card{\Gamma_{n-1}}$.
    These clearly grow without bound, so that for any $M > 0$, the equation $\haar_H(b_n) \geq M$ holds for all but finitely many $n$.
    Since $H$ is $(L_1)$ by \cref{thm:l1-dual-profinite}, then by \cref{thm:alaghmandan} we have that $\Lo{H} = \falg{\hdual{H}} = \kfalg{G}{K}$ is not amenable.
    The final statement follows by \cref{thm:cfalg-quot-kfalg}.
\end{proof}
\begin{corollary}
    $\cfalg{\sdp{\Ri^d}{\GL{\Ri}{d}}}$ is not amenable.
\end{corollary}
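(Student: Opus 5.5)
The corollary should follow by applying \cref{thm:layered-non-amenability} with $\Gamma = \pdual{\Ri^d}$ and $K = K_d = \GL{\Ri}{d}$ acting by the dual action. Since $\Ri^d$ is a compact abelian group, Pontryagin duality gives $\pdual{\Gamma} \cong \Ri^d = G_d$. The theorem then yields that $\kfalg{G_d}{K_d}$ is not amenable, and hence that $\cfalg{\sdp{G_d}{K_d}} = \cfalg{\sdp{\Ri^d}{\GL{\Ri}{d}}}$ is not amenable.

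Concretely, the steps are as follows. First, I identify $\Gamma = \pdual{\Ri^d} \cong \Fi^d/\Ri^d$, which is discrete because $\Ri^d$ is compact, and I check that the dual action of $K_d$ on $\Gamma$ is discretely layered. This is exactly the second half of \cref{thm:dvr-dual-layering}, with layering subgroups $\Gamma_n = \Mi_d^{-n}/\Ri^d$.

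Second, I check the conventions required by the definition of a discretely layered action. We need $\Gamma_0 = \Mi_d^0/\Ri^d = \set{0}$, the union of the $\Gamma_n$ to be $\Fi^d/\Ri^d$ (true since $\Fi = \bigcup_n \Mi^{-n}$), and each $\Gamma_n$ to be finite. Finiteness holds because $\Mi^{-n}/\Ri \cong \Ri/\Mi^n$, a finite ring. The orbits being the shells $\Gamma_n\setminus\Gamma_{n-1}$ comes from \cref{thm:orbits-fd} passed to the quotient. For this I need the identification $\pdual{\Ri^d}\cong\Fi^d/\Ri^d$ to intertwine the dual action with some action of $\GL{\Ri}{d}$ on $\Fi^d$ (for instance $k\mapsto (k^{T})^{-1}$ under the standard pairing through a character of $\Fi$ trivial on $\Ri$ but not on $\Mi^{-1}$). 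That action preserves the shells $\Mi_d^{n}\setminus\Mi_d^{n+1}$ and is transitive on each, by \cref{thm:orbits-fd} applied to the group $\GL{\Ri}{d}$, which is closed under transpose-inverse.

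Finally, I invoke \cref{thm:layered-non-amenability}, whose last clause (via \cref{thm:cfalg-quot-kfalg}) gives the statement for $\cfalg{\sdp{\Ri^d}{\GL{\Ri}{d}}}$.

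I expect the main obstacle to be the bookkeeping in the second step: making sure the dual action really corresponds to a $\GL{\Ri}{d}$-action on $\Fi^d/\Ri^d$ whose orbits are the images of the shells. One point deserves care: an orbit in the quotient of $\Mi_d^{-n}\setminus\Mi_d^{-n+1}$ is the image of an orbit in $\Fi^d$. Transitivity upstairs on $\Mi_d^{-n}\setminus\Mi_d^{-n+1}$ therefore gives transitivity downstairs on $\Gamma_n\setminus\Gamma_{n-1}$, since the shell is a union of $\Ri^d$-cosets. Beyond this, the argument is a direct citation of \cref{thm:dvr-dual-layering} and \cref{thm:layered-non-amenability}.
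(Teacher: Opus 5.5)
Your proposal is correct and follows the paper's route. The corollary is exactly \cref{thm:layered-non-amenability} applied to $\Gamma = \pdual{\Ri^d} \cong \Fi^d/\Ri^d$, using the discrete layering $\Mi_d^{-n}/\Ri^d$ from \cref{thm:dvr-dual-layering}, with the passage to $\cfalg{\sdp{\Ri^d}{\GL{\Ri}{d}}}$ via \cref{thm:cfalg-quot-kfalg}. Your extra check fills in a detail the paper's proof of \cref{thm:dvr-dual-layering} leaves implicit: under a character trivial on $\Ri$ but not on $\Mi^{-1}$, the dual action becomes $y \mapsto (k^T)^{-1}y$, which has the same orbits as the standard action.
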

This reaffirms the conjecture of Alaghmandan and Spronk.
These are perhaps the simplest examples which are not directly addressed in their original paper, particularly when $d=1$. 

\subsection{On the duality of layerings}
\label{sec:duality-layering}

When $G$ is abelian, it may seem an attractive prospect that if $K \actson G$ is compactly layered by $G_n$, then $K \actson \pdual{G}$ is discretely layered by $\ann{G_n}$.
Certainly, \cref{thm:dvr-dual-layering} shows this to be true for $\GL{\Ri}{d} \actson \Ri^d$.
However, in attempting a proof, surprising complications arise.
We present below a partial proof of this result, though the final step currently remains unsolved (and is a rather interesting problem in its own right).
\begin{conjecture}
    \label{cnj:layer-to-dual}
    Let $G$ be a compact abelian group, and let $K$ be a compact group acting on $G$.
    If $K \actson G$ is compactly layered by $G_n$, then $K \actson \pdual{G}$ is discretely layered by $F_n = \ann{G_n}$.
\end{conjecture}
\begin{proof}[Partial proof]
    Without loss of generality, it suffices to show that $F_1 \setminus \set{e}$ is an orbit.
    To this end, consider the natural action of $K$ on the quotient $G/G_1$.
    By assumption this has two orbits: the trivial orbit and everything else.
    Let us say that such actions are \define{transitive}.

    Notice that $G/G_1$ is finite.
    So by transitivity, every (non-trivial) element must have the same order, and hence it is of the form $\gf{q}^n$ where $\gf{q}$ is the finite field of order $q$.
    We also have that $F_1 \cong \pdual{G/G_1} \cong \pdual{\gf{q}^n} \cong \gf{q}^n$.
    All that remains now is to show that $K \actson F_1$ is also transitive.
\end{proof}
So the question remains, is this dual action also transitive?
In general, it is not too difficult to see that the dual action of $K \actson \gf{q}^n$ is given by $K^T \actson \gf{q}^n$ where $K^T \deq \set{A^T : A \in K}$.
\begin{conjecture}
    Let $G$ be a transitive subgroup of $\GL{\gf{q}}{n}$ acting on $\gf{q}^n$.
    Then the transpose group $G^T$ is also transitive on $\gf{q}^n$.
\end{conjecture}

This question is surprisingly difficult to answer.
There is a classification of all transitive subgroups of $\GL{\gf{q}}{n}$ due to Hering \cite{hering}: in particular there are four infinite classes of transitive subgroups, as well as a handful of sporadic examples.
This is summarised quite succinctly in Appendix 1 of Liebeck's paper \cite{liebeck}.


\section{Acknowledgements}
The author would like to express gratitude to Nico Spronk, for the suggestion of the problem, as well as his guidance and advice. 



\bibliographystyle{plain}
\bibliography{biblio.bib}

\appendix
\section{Hypergroup structure of layered actions}
In preparing this paper, the full hypergroup structure induced by these group had been explicitly computed.
However, it was discovered that this was no longer necessary due to results such as \cref{thm:l1-dual-profinite}.
Nonetheless, its computation yields a relatively simple hypergroup structure, which may be of interest to those studying hypergroups in other contexts.
We provide the details here as a reference.

Suppose that $K \actson \Gamma$ is an action discretely layered by $\Gamma_n$. 
Let $H$ be the associated hypergroup, whose elements $b_n$ correspond to the orbit $B_n = \Gamma_n \setminus \Gamma_{n-1}$.
By \cref{thm:discrete-hypergroup-dirac}, we have that
\begin{equation}
    b_n 
    = \frac{1}{\card{B_n}} \indic{B_n}
    = \frac{1}{\card{\Gamma_n} - \card{\Gamma_{n-1}}} \left(\indic{\Gamma_{n}} - \indic{\Gamma_{n-1}}\right)
\end{equation}
For convenience, we shall let $g_n = \indic{\Gamma_n}$ and $q_n = \card{\Gamma_n}$. 
\begin{proposition}
    \label{thm:hyp-comb}
    Let $H$ be a discrete layered hypergroup coming from the action $K \actson \Gamma$, and assume the notation as given above.
    For $n > m \geq 0$ we have
    \begin{equation*}
        b_n * b_m = b_n
        \qquad \text{and} \qquad
        b_n * b_n = \frac{1}{q_n - q_{n-1}}\left[\sum_{i=0}^{n} (q_{i} - q_{i-1})b_i - q_{n-1}b_n\right]
    \end{equation*}
    where we set $q_{-1} \deq 0$ for convenience.
\end{proposition}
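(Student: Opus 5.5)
The plan is to compute everything inside $\lo{\Gamma}$ using ordinary group convolution, since the hyperproduct of the orbit hypergroup is inherited from $\Gamma$, and the elements $b_n$ are the normalised indicators from \cref{thm:discrete-hypergroup-dirac}. I will write each $b_n$ in terms of the functions $g_n = \indic{\Gamma_n}$, compute the convolutions $g_i * g_j$ of indicators of subgroups, and then translate back into the $b_i$.

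The key computation is the following. Since $\Gamma_i \subseteq \Gamma_j$ for $i \le j$ are finite subgroups, we have $(g_i * g_j)(x) = \card{\set{y \in \Gamma_i : \inv{y}x \in \Gamma_j}}$. This equals $q_i$ if $x \in \Gamma_j$ and $0$ otherwise. Hence
\[
g_i * g_j = q_{\min(i,j)}\, g_{\max(i,j)} .
\]

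Now take $n > m \ge 0$ and expand $(q_n - q_{n-1})(q_m - q_{m-1})\, b_n * b_m = (g_n - g_{n-1}) * (g_m - g_{m-1})$. Since $m \le n-1$, each of the four terms has the larger index equal to $n$ or $n-1$. This gives
\[
(q_m - q_{m-1})(g_n - g_{n-1}),
\]
where the convention $g_{-1} = 0$, $q_{-1} = 0$ covers the case $m = 0$. Dividing by the normalisations yields $b_n * b_m = b_n$.

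For the diagonal case, expand $(g_n - g_{n-1}) * (g_n - g_{n-1})$ using the same rule:
\[
q_n g_n - 2q_{n-1} g_n + q_{n-1} g_{n-1} = (q_n - q_{n-1}) g_n - q_{n-1}(g_n - g_{n-1}).
\]
It remains to rewrite $g_n$ in the $b$-basis. This uses the telescoping identity
\[
g_n = \sum_{i=0}^{n} (g_i - g_{i-1}) = \sum_{i=0}^n (q_i - q_{i-1}) b_i,
\]
together with $g_n - g_{n-1} = (q_n - q_{n-1}) b_n$. Substituting and dividing by $(q_n - q_{n-1})^2$ gives
\[
b_n * b_n = \frac{1}{q_n - q_{n-1}}\left[ \sum_{i=0}^n (q_i - q_{i-1}) b_i - q_{n-1} b_n \right],
\]
which is the claimed formula.

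I do not expect a serious obstacle. The main points of care are bookkeeping of the boundary conventions ($\Gamma_{-1} = \emptyset$, so $g_{-1} = 0$ and $q_{-1} = 0$) and confirming that the hyperproduct on $H$ really is the restriction of convolution on $\lo{\Gamma}$ to these radial elements. The latter holds because Muruganandam's orbit hypergroup uses the convolution inherited from $\Gamma$, and the convolution of $K$-radial measures is again $K$-radial by \SH{2}. For the product to be a probability measure, the stated coefficients should sum to $1$; this gives a quick sanity check via $\sum_{i \le n}(q_i - q_{i-1}) - q_{n-1} = q_n - q_{n-1}$.
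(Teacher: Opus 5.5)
Your proposal is correct and matches the paper's proof essentially step for step: the same convolution rule for indicators of the nested subgroups, the same expansions of $(g_n - g_{n-1}) * (g_m - g_{m-1})$ and $(g_n - g_{n-1}) * (g_n - g_{n-1})$, and the same telescoping identity $g_n = \sum_{i=0}^{n} (q_i - q_{i-1}) b_i$. Your rule $g_i * g_j = q_{\min(i,j)}\, g_{\max(i,j)}$ is the correct form of the paper's identity, which misprints it as $g_n * g_m = g_m$ for $m \le n$ even though its computation actually uses $q_m g_n$.
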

\begin{proof}
    The definition of convolution immediately yields that $g_n * g_m (x) = \card{\Gamma_n \cap x \Gamma_m}$, and so $g_n * g_m = g_m * g_n = g_m$ for $m \leq n$.
    Supposing now that $m < n$, we have
    \begin{align*}
        b_n * b_m
        &= \frac{(g_n - g_{n-1})*(g_m - g_{m-1})}{(q_n - q_{n-1})(q_m - q_{m-1})} \\
        &= \frac{q_m g_n - q_{m-1} g_n - q_m g_{n-1} + q_{m-1}g_{n-1}}{(q_n - q_{n-1})(q_m - q_{m-1})} \\
        &= \frac{(q_m - q_{m-1}) (g_n - g_{n-1})}{(q_n - q_{n-1})(q_m - q_{m-1})}  = b_n
    \end{align*}
    Similarly, we can compute 
    \begin{align*}
        b_n * b_n
        &= \frac{(g_n - g_{n-1})*(g_n - g_{n-1})}{(q_n - q_{n-1})^2} \\
        &= \frac{q_n g_n - 2 q_{n-1} g_n + q_{n-1}g_{n-1}}{(q_n - q_{n-1})^2}\\
        &= \frac{(q_n - q_{n-1}) g_n - q_{n-1} (g_n - g_{n-1})}{(q_n - q_{n-1})^2}\\
        &= \frac{g_n - q_{n-1} b_n}{q_n - q_{n-1}}
    \end{align*}
    Since $b_n$ is a weighted difference of $g_n$ and $g_{n-1}$, we can reverse this to obtain $g_n = \sum_{i=0}^{n} (q_{i} - q_{i-1})b_i$, from which it follows that
    \begin{equation*}
        b_n * b_n = \frac{1}{q_n - q_{n-1}}\left[\sum_{i=0}^{n} (q_{i} - q_{i-1})b_i - q_{n-1}b_n\right]\qedhere
    \end{equation*}
\end{proof}

\end{document}